\newtheorem{theorem}{\bf Theorem}[section]
\newtheorem{lemma}[theorem]{\bf Lemma}
\newtheorem{proposition}[theorem]{\bf Proposition}
\newcommand{\qed}{\hfill $\square$ \bigskip}
\begin{document}

\baselineskip=0.30in
\vspace*{40mm}

\begin{center}
{\LARGE \bf On the Steiner hyper-Wiener index of a graph}
\bigskip \bigskip

{\large \bf Niko Tratnik
}
\bigskip\bigskip

\baselineskip=0.20in

\textit{Faculty of Natural Sciences and Mathematics, University of Maribor, Slovenia} \\
{\tt niko.tratnik1@um.si}
\medskip

\bigskip\medskip

(Received May 15, 2018)

\end{center}

\noindent
\begin{center} {\bf Abstract} \end{center}

\vspace{3mm}\noindent
In this paper, we study the Steiner hyper-Wiener index of a graph, which is obtained from the standard hyper-Wiener index by replacing the classical graph distance with the Steiner distance. It is shown how this index is related to the Steiner Hosoya polynomial, which generalizes similar result for the standard hyper-Wiener index. Next, we show how the Steiner $3$-hyper-Wiener index of a modular graph  can be expressed by using the classical graph distances. As the main result, a method for computing this index for median graphs is developed. Our method makes computation of the Steiner $3$-hyper-Wiener index much more efficient. Finally, the method is used to obtain the closed formulas for the Steiner $3$-Wiener index and the Steiner $3$-hyper-Wiener index of grid graphs. 

%\vspace{5mm}

\baselineskip=0.30in

\noindent {\bf Keywords:} Steiner distance; Wiener index; hyper-Wiener index; Hosoya polynomial; median graph; partial cube

 \medskip\noindent
 {\bf AMS Subj. Class:} 92E10; 05C12; 05C31; 05C90

%%%%%%%%%%%%%%%%%%%%%%%%%%%%%%%%%%%%%%%%%%%%%%%%%%%%%%%%%%%%%%%%%%%%%
%%%%%%%%%%%%%%%%%%%%%%%%%%%%%%%%%%%%%%%%%%%%%%%%%%%%%%%%%%%%%%%%%%%%%
\section{Introduction}
%%%%%%%%%%%%%%%%%%%%%%%%%%%%%%%%%%%%%%%%%%%%%%%%%%%%%%%%%%%%%%%%%%%%%
%%%%%%%%%%%%%%%%%%%%%%%%%%%%%%%%%%%%%%%%%%%%%%%%%%%%%%%%%%%%%%%%%%%%%

The Wiener index and the hyper-Wiener index are distance-based graph invariants, used as structure descriptors for predicting physico–chemical properties of organic compounds (often those significant for chemistry, pharmacology, agriculture, environment-protection etc.). Their history goes back to $1947$, when H. Wiener used the distances in the molecular graphs of alkanes to calculate their boiling points \cite{Wiener}. This research has led to the {Wiener index}, which is defined as
$$W(G) = \sum_{\lbrace u, v \rbrace \subseteq V(G)} d(u,v)$$

\noindent
for any connected graph $G$. The origins and applications of the Wiener index are discussed in \cite{rouvray}, while some recent research related to this index can be found in \cite{cai,krnc,lan,lei}. 

The hyper-Wiener index was introduced in 1993 by M. Randi\' c \cite{randic} and has been extensively studied in many papers (see, for example, \cite{chen,klavzar-2000,zigert-2000}). Randi\' c's original definition of the hyper-Wiener index was applicable just to trees and therefore, the {hyper-Wiener index} was later defined for any connected graph $G$ \cite{klein} as
$$WW(G) = \frac{1}{2}\sum_{\lbrace u, v \rbrace \subseteq V(G)} d(u,v) + \frac{1}{2}\sum_{\lbrace u, v \rbrace \subseteq V(G)} d(u,v)^2.$$

The hyper-Wiener index is related to the Hosoya polynomial \cite{hosoya}, which is defined as
$$H(G,x) = \sum_{m \geq 0} d(G,m)x^m,$$
where $d(G,m)$ is the number of unordered pairs of vertices at distance $m$. In \cite{cash} the following relation was shown:
$$WW(G) = H'(G,1) + \frac{1}{2}H''(G,1).$$

Distance-based topological indices were extensively investigated and also the edge versions were studied \cite{trat-zi}. Many methods for computing these indices more efficiently were proposed and the most famous between them is the cut method \cite{klavzar-2015}. This method is commonly used on benzenoid systems \cite{cre-trat1,tratnik2} or on partial cubes \cite{cre-trat}, which constitute a large class of graphs with a lot of applications and includes, for example, many families of chemical graphs (benzenoid systems, trees, phenylenes, cyclic phenylenes, polyphenylenes). In particular, methods for computing the hyper-Wiener index and the edge-hyper-Wiener index were introduced in \cite{sklavzar-2000,tratnik1}.

The Steiner distance of a graph, introduced by
Chartrand et.\,al.\,in 1989 \cite{char}, is a natural and nice generalization of the concept of classical graph distance. For a set $S \subseteq V(G)$, the Steiner distance $d(S)$ among the vertices of $S$ is the minimum size among all connected subgraphs whose vertex sets contain $S$. The Steiner distance in a graph was considered in many papers, for some relevant investigations see \cite{beineke-1996,eroh,oe-1999,oe-ti-1990,yeh-chi-2008} and a survey paper \cite{mao}. On the other hand, the Steiner tree problem requires a tree $T$ with minimum number of edges such that $S \subseteq V(T)$. In general, this problem is known to be NP-complete \cite{hwang}. The Steiner distance and the Steiner tree problem have a lot of applications in real-word problems, for example in circuit layout, network design and in modelling of biomolecular structures \cite{mon}.

If in the definition of the Wiener index the classical graph distance  is replaced by the Steiner distance, the Steiner Wiener index is obtained \cite{li}. In particular, if we replace the distances between pairs of vertices by the distances of all subsets with cardinality $k$, we obtain the Steiner $k$-Wiener index. It was shown in \cite{gut1} that for some molecules the combination of the Steiner Wiener index and the Wiener index has even better correlation with the boiling points than the Wiener index. For some recent investigations on the Steiner Wiener index see \cite{li1,mao1,mao3}. However, a closely similar concept was already studied in the past under the name average Steiner distance \cite{dan1,dan2}. Moreover, the Steiner degree distance \cite{gut,mao2} 
and other  analogous generalizations of distance-based molecular descriptors (where the classical distance is replaced by the Steiner distance) were introduced \cite{mao}.
%and the Steiner Hosoya polynomial \cite{ali} were also introduced.

In \cite{li} the relation between the Steiner $3$-Wiener index and the Wiener index was shown for trees and later for modular graphs \cite{kovse} (see the definition in the preliminaries). In this paper, we first prove the relation between the Steiner hyper-Wiener index and the Steiner Hosoya polynomial. Next, we show how the Steiner $3$-hyper-Wiener index of a modular graph  can be expressed by using the classical graph distances. Furthermore, if $G$ is a partial cube, we develop a cut method for computing the Steiner $3$-hyper-Wiener index, which enables us to compute the index very efficiently and also to find the closed formulas for some families of graphs. Finally, our method is used to obtain the closed formulas for the Steiner $3$-Wiener index and the Steiner $3$-hyper-Wiener index of grid graphs.

\section{Preliminaries}

\noindent
Unless stated otherwise, the graphs considered in this paper are simple and finite. For a graph $G$ we say that $|V(G)|$ is the \textit{order} of $G$ and that $|E(G)|$ is its \textit{size}. Moreover, we define $d_G(x,y)$ (or simply $d(x,y)$) to be the usual shortest-path distance between vertices $x, y \in V(G)$.
\bigskip

\noindent
 For a connected graph $G$ and an non-empty set $S \subseteq V(G)$, the \textit{Steiner distance} among the vertices of $S$, denoted by $d_G(S)$ or simply by $d(S)$, is the minimum size among
all connected subgraphs whose vertex sets contain $S$. Note that if $H$ is a connected
subgraph of $G$ such that $S \subseteq V(H)$ and $|E(H)|=d(S)$, then $H$ is a tree. An \textit{$S$-Steiner tree} or a \textit{Steiner tree for $S$}  is a subgraph $T$ of $G$ such that $T$ is a tree and $S \subseteq V(T)$. Moreover, if $T$ is a Steiner tree for $S$ such that $|E(T)|=d(S)$, then $T$ is called a \textit{minimum Steiner tree for $S$}. It is obvious that for a set $S=\lbrace x,y \rbrace$, $x \neq y$, it holds $d(S)=d(x,y)$.
\bigskip

\noindent
Let $G$ be a connected graph and $k$ a positive integer such that $k \leq |V(G)|$. The \textit{Steiner $k$-Wiener index} of $G$, denoted by $SW_k(G)$, is defined as
$$SW_k(G) = \sum_{\substack{S \subseteq V(G) \\ |S|=k}}d(S).$$ 
The \textit{Steiner $k$-hyper-Wiener index} of $G$, denoted by $SWW_k(G)$, is defined as
$$SWW_k(G) = \frac{1}{2}\sum_{\substack{S \subseteq V(G) \\ |S|=k}}d(S) + \frac{1}{2}\sum_{\substack{S \subseteq V(G) \\ |S|=k}}d(S)^2.$$
The \textit{Steiner $k$-Hosoya polynomial} of $G$, denoted by $SH_k(G,x)$, is defined as
$$SH_k(G,x) = \sum_{m\geq 0}d_k(G,m)x^m,$$
where $d_k(G,m)$ denotes the number of subsets $S \subseteq V(G)$ with $|S|=k$ and $d(S)=m$.
\bigskip

\noindent
Two edges $e_1 = u_1 v_1$ and $e_2 = u_2 v_2$ of a connected graph $G$ are in relation $\Theta$, $e_1 \Theta e_2$, if
$$d_G(u_1,u_2) + d_G(v_1,v_2) \neq d_G(u_1,v_2) + d_G(u_1,v_2).$$
Note that this relation is also known as Djokovi\' c-Winkler relation.
The relation $\Theta$ is reflexive and symmetric, but not necessarily transitive.
We denote its transitive closure (i.e.\ the smallest transitive relation containing $\Theta$) by $\Theta^*$.
\bigskip

\noindent
The {\em hypercube} $Q_n$ of dimension $n$ is defined in the following way: 
all vertices of $Q_n$ are presented as $n$-tuples $(x_1,x_2,\ldots,x_n)$ where $x_i \in \{0,1\}$ for each $i, 1\leq i\leq n$, 
and two vertices of $Q_n$ are adjacent if the corresponding $n$-tuples differ in precisely one position. 
\bigskip

\noindent
A subgraph $H$ of a graph $G$ is called an \textit{isometric subgraph} if for each $u,v \in V(H)$ it holds $d_H(u,v) = d_G(u,v)$. Any isometric subgraph of a hypercube is called a {\em partial cube}. For an edge $ab$ of a graph $G$, let $W_{ab}$ be the set of vertices of $G$ that are closer to $a$ than
to $b$. We write $\langle S \rangle$ for the subgraph of $G$ induced by $S \subseteq V(G)$. It is known that for any partial cube $G$ the relation $\Theta$ is transitive, i.e.\,$\Theta=\Theta^*$. Moreover, for any $\Theta$-class  $E$ of a partial cube $G$, the graph $G - E$ has exactly two connected components, namely $\langle W_{ab} \rangle $ and $\langle W_{ba} \rangle$, where $ab \in E$. For more information about $\Theta$ relation and partial cubes see \cite{klavzar-book}. 
\bigskip

\noindent
Let $G$ be a graph. A \textit{median} of a triple of vertices $u,v,w$ of $G$ is a vertex $z$ that lies on a shortest $u,v$-path, on a shortest $u,w$-path and on a shortest $v,w$-path ($z$ can be one of the vertices $u,v,w$). A graph is a \textit{median graph} if every triple of its vertices has a unique median. These graphs were first introduced in \cite{avann} by Avann and arise naturally in the study of ordered sets and distributive lattices. Moreover, a graph is called a \textit{modular graph} if every triple of vertices has at least one median. Obviously, any median graph is also a modular graph. It is known that trees, hypercubes, and grid graphs (Cartesian products of two paths) are median graphs. Moreover, any median graph is a partial cube \cite{klavzar-book}.
\bigskip

\noindent
The following relation between the Steiner 3-Wiener index and the Wiener index was first found out for trees \cite{li} and later generalized to modular graphs \cite{kovse}. 

\begin{theorem} \label{wie} \cite{kovse}
Let $G$ be a modular graph with at least three vertices. Then
$$SW_3(G) =\frac{|V(G)|-2}{2}W(G).$$
\end{theorem}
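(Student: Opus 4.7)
The plan is to prove the pointwise identity
\[
d(\{u,v,w\}) \;=\; \tfrac{1}{2}\bigl(d(u,v)+d(u,w)+d(v,w)\bigr)
\]
for every triple in a modular graph, and then sum over all triples. The identity reduces the Steiner $3$-distance to classical pairwise distances, after which the Wiener index appears by a standard double-counting argument.

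For the upper bound $d(\{u,v,w\}) \le \tfrac{1}{2}(d(u,v)+d(u,w)+d(v,w))$ I would use modularity directly. Let $z$ be a median of the triple, so $z$ lies on some shortest path between each pair. Then
\[
d(u,v)=d(u,z)+d(z,v),\qquad d(u,w)=d(u,z)+d(z,w),\qquad d(v,w)=d(v,z)+d(z,w),
\]
which on addition yields $d(u,v)+d(u,w)+d(v,w)=2\bigl(d(u,z)+d(v,z)+d(w,z)\bigr)$. Taking the union of shortest $u$-$z$, $v$-$z$, $w$-$z$ paths produces a connected subgraph containing $\{u,v,w\}$ of size at most $d(u,z)+d(v,z)+d(w,z)$, giving the desired upper bound on $d(\{u,v,w\})$.

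For the lower bound I would work with an arbitrary minimum Steiner tree $T$ for $\{u,v,w\}$. Since $T$ is a tree on at least three distinguished vertices, there is a (possibly equal to one of $u,v,w$) branch vertex $y\in V(T)$ such that the three $T$-paths from $y$ to $u,v,w$ are pairwise internally disjoint, and hence
\[
|E(T)| \;\ge\; d_T(u,y)+d_T(v,y)+d_T(w,y) \;=\; \tfrac{1}{2}\bigl(d_T(u,v)+d_T(u,w)+d_T(v,w)\bigr).
\]
Since distances in a subgraph are at least distances in $G$, this gives $d(\{u,v,w\})=|E(T)|\ge \tfrac{1}{2}(d(u,v)+d(u,w)+d(v,w))$, completing the identity. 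The combination of the two bounds does not actually need modularity for the lower bound; modularity is essential only for producing the median $z$ in the upper bound, which I expect to be the one delicate point to state cleanly.

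Finally I would sum:
\[
SW_3(G)=\sum_{\{u,v,w\}}d(\{u,v,w\})=\tfrac{1}{2}\sum_{\{u,v,w\}}\bigl(d(u,v)+d(u,w)+d(v,w)\bigr).
\]
Each unordered pair $\{u,v\}$ occurs inside exactly $|V(G)|-2$ triples (one for each choice of third vertex), so the triple sum rearranges to $(|V(G)|-2)\sum_{\{u,v\}}d(u,v)=(|V(G)|-2)W(G)$, yielding $SW_3(G)=\tfrac{|V(G)|-2}{2}W(G)$. The main obstacle is simply justifying the branch-vertex decomposition of a Steiner tree cleanly; once that is in place the rest is bookkeeping.
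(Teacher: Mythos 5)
Your proof is correct and follows essentially the same route as the paper's own machinery: the pointwise identity $2d(\{u,v,w\})=d(u,v)+d(u,w)+d(v,w)$ for triples with a median is exactly the content of Lemma \ref{mod_raz}, proved there by the same median construction for the upper bound and the same branch-vertex decomposition of a minimum Steiner tree for the lower bound, and your final double count over pairs is the same bookkeeping used in the proof of Theorem \ref{hi-wie}. The paper itself only cites Theorem \ref{wie} from the literature, but your argument is a valid self-contained proof of it.
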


\section{The Steiner hyper-Wiener index and the Steiner Hosoya polynomial}

In 2002 G. G. Cash showed the relationship between the Hosoya polynomial and the hyper-Wiener index, see \cite{cash}. In the present section we generalize this result and show that it holds also for the Steiner $k$-hyper-Wiener index and the Steiner $k$-Hosoya polynomial where $k$ is a positive integer such that $k \leq |V(G)|$.

First, we notice the obvious connection between the Steiner $k$-Wiener index and the Steiner $k$-Hosoya polynomial.

\begin{proposition} \label{pom1}
Let $G$ be a connected graph and $k$ a positive integer such that $k \leq |V(G)|$. Then
$$SW_k(G) = SH_k'(G,1).$$
\end{proposition}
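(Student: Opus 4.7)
The plan is to unwind both sides of the claimed identity into sums indexed by the possible values of the Steiner distance, and then verify that the resulting expressions coincide term-by-term.

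First I would recall the two definitions. On the combinatorial side, $SW_k(G)$ is a sum over all $k$-subsets $S \subseteq V(G)$ of the Steiner distance $d(S)$. The trick is to group the summands according to the value of $d(S)$: since every such subset $S$ contributes exactly $d(S)$ to the total and since, by definition, there are $d_k(G,m)$ subsets with $d(S)=m$, one obtains
\[
SW_k(G) \;=\; \sum_{\substack{S \subseteq V(G) \\ |S|=k}} d(S) \;=\; \sum_{m \geq 0} m \cdot d_k(G,m).
\]

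On the polynomial side, differentiating $SH_k(G,x) = \sum_{m \geq 0} d_k(G,m) x^m$ term by term gives $SH_k'(G,x) = \sum_{m \geq 1} m \cdot d_k(G,m) x^{m-1}$. Evaluating at $x=1$ yields $SH_k'(G,1) = \sum_{m \geq 1} m \cdot d_k(G,m)$, which is the same as $\sum_{m \geq 0} m \cdot d_k(G,m)$ since the $m=0$ term vanishes. Comparing with the previous display proves the identity.

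There is essentially no obstacle: the proof is a one-line regrouping followed by elementary differentiation. One mild point worth mentioning for completeness is that the sum defining $SH_k(G,x)$ is finite (only finitely many values of $m$ contribute, since $d_k(G,m)=0$ for $m > |V(G)|-1$, or more crudely for $m > |E(G)|$), so differentiation and evaluation at $x=1$ are entirely legitimate and require no convergence argument. This justifies treating $SH_k(G,x)$ as a polynomial throughout.
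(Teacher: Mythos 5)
Your proof is correct and is exactly the standard regrouping-plus-differentiation argument; the paper in fact states this proposition without proof, calling the connection ``obvious,'' and your write-up simply supplies the routine details (including the harmless remark that the polynomial is finite).
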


\noindent
Now we can state the main result of the section.

\begin{theorem}
\label{povezava}
Let $G$ be a connected graph and $k$ a positive integer such that $k \leq |V(G)|$. Then
$$SWW_k(G) = SH_k'(G,1) + \frac{1}{2}SH_k''(G,1).$$
\end{theorem}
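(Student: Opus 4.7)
The plan is to compute $SH_k'(G,1)$ and $SH_k''(G,1)$ directly from the definition of the Steiner $k$-Hosoya polynomial and then combine them to match the definition of $SWW_k(G)$. Since everything is a formal manipulation of a polynomial with nonnegative integer coefficients $d_k(G,m)$, there is no combinatorial obstruction to overcome; the argument is essentially a one-line identity, but it is worth writing out carefully to highlight where the $d(S)^2$ contribution comes from.

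First, I will recall (or invoke Proposition \ref{pom1}) that $SH_k'(G,1) = \sum_{m \geq 0} m \cdot d_k(G,m) = \sum_{|S|=k} d(S) = SW_k(G)$, which handles the linear part of $SWW_k(G)$. Next, I will differentiate $SH_k(G,x)$ twice termwise to obtain
\begin{equation*}
SH_k''(G,x) = \sum_{m \geq 2} m(m-1)\, d_k(G,m)\, x^{m-2},
\end{equation*}
and evaluate at $x = 1$ to get
\begin{equation*}
SH_k''(G,1) \;=\; \sum_{m \geq 0} m(m-1)\, d_k(G,m) \;=\; \sum_{\substack{S \subseteq V(G) \\ |S|=k}} \bigl(d(S)^2 - d(S)\bigr),
\end{equation*}
where the last equality follows from grouping the subsets $S$ of size $k$ according to their Steiner distance, exactly as in the definition of $d_k(G,m)$.

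Finally, I will add the two contributions:
\begin{equation*}
SH_k'(G,1) + \tfrac{1}{2}SH_k''(G,1) \;=\; \sum_{|S|=k} d(S) + \tfrac{1}{2}\sum_{|S|=k}\bigl(d(S)^2 - d(S)\bigr) \;=\; \tfrac{1}{2}\sum_{|S|=k} d(S) + \tfrac{1}{2}\sum_{|S|=k} d(S)^2,
\end{equation*}
which is exactly $SWW_k(G)$ by definition. The only conceivable subtlety is making sure the sums over $m \geq 0$ can be reindexed as sums over subsets $S$ of size $k$, but this is immediate from the definition of $d_k(G,m)$ as the number of such subsets with $d(S) = m$, together with the fact that $d(S) \leq |V(G)|-1$ is bounded so all sums are finite. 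No step should present a genuine obstacle.
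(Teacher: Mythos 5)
Your proof is correct and follows essentially the same route as the paper: both arguments reduce the claim to termwise differentiation of $SH_k(G,x)$ and regrouping the sum over $m$ as a sum over $k$-subsets. The only cosmetic difference is that you evaluate $SH_k''(G,1)=\sum_{|S|=k}\bigl(d(S)^2-d(S)\bigr)$ directly, whereas the paper extracts $\sum_{|S|=k}d(S)^2$ by differentiating the auxiliary function $f(x)=xSH_k'(G,x)$; the two computations are equivalent.
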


\begin{proof}
Since by Proposition \ref{pom1}
$$  \frac{1}{2}\sum_{\substack{S \subseteq V(G) \\ |S|=k}}d(S) = \frac{1}{2}SW_k(G) = \frac{1}{2}SH_k'(G,1),$$
it follows
$$SWW_k(G) = \frac{1}{2}SH_k'(G,1) + \frac{1}{2} \sum_{\substack{S \subseteq V(G) \\ |S|=k}}d(S)^2$$
and it suffices to show
$$\frac{1}{2} \sum_{\substack{S \subseteq V(G) \\ |S|=k}}d(S)^2 = \frac{1}{2}SH_k'(G,1) + \frac{1}{2}SH_k''(G,1).$$
First we notice that
$$\sum_{\substack{S \subseteq V(G) \\ |S|=k}}d(S)^2 = \sum_{m \geq 1}m^2d_k(G,m).$$
Furthermore,
$$(xSH_k'(G,x))' = \left(\sum_{m \geq 1}m \cdot d_k(G,m)x^{m}\right)' = \sum_{m \geq 1}m^2d_k(G,m)x^{m-1}$$
and therefore, if we denote $f(x) = xSH_k'(G,x)$,
$$\sum_{\substack{S \subseteq V(G) \\ |S|=k}}d(S)^2 = f'(1).$$
However,
$$f'(x) = (xSH_k'(G,x))' = SH_k'(G,x) + xSH_k''(G,x)$$
and finally it follows that
$$ \sum_{\substack{S \subseteq V(G) \\ |S|=k}}d(S)^2 = SH_k'(G,1) + SH_k''(G,1),$$
which completes the proof. \qed

\end{proof}
\smallskip

In the rest of this section the previous result is demonstrated on two basic examples. First, let $K_n$ be a complete graph on $n$ vertices and let $k$ be a positive integer such that $k \leq n$. If $S \subseteq V(K_n)$ with $|S|=k$, then $d(S)=k-1$. Therefore
$$SH_k(K_n,x) = {{n}\choose{k}} x^{k-1}.$$
Using Proposition \ref{pom1} and Theorem \ref{povezava} we obtain
$$SW_k(K_n) = (k-1){{n}\choose{k}},$$
$$SWW_k(K_n)= \frac{k(k-1)}{2}{{n}\choose{k}} = {k \choose 2}{{n}\choose{k}}.$$

\noindent
Next, let $P_n$ be a path on $n$ vertices and let $k \geq 2$ be a positive integer such that $k \leq n$. Obviously, for any $j \in \lbrace k-1, \ldots, n-1\rbrace$ we obtain $d_k(P_n,j)=(n-j) {{j-1}\choose{k-2}}$. Therefore
$$SH_k(P_n,x) = \sum_{j=k-1}^{n-1}(n-j){{j-1}\choose{k-2}} x^{j}.$$
Using Proposition \ref{pom1} and Theorem \ref{povezava} we obtain
$$SW_k(P_n) = \sum_{j=k-1}^{n-1}j(n-j){{j-1}\choose{k-2}} = (k-1){{n+1}\choose{k+1}},$$
\begin{eqnarray*}
SWW_k(P_n) &= &  \sum_{j=k-1}^{n-1}\frac{j(j+1)(n-j)}{2}{{j-1}\choose{k-2}} \\
& = & \frac{n(n+1)(n+2)(n-k+1)}{2(k+1)(k+2)} {{n-1} \choose {k-2}} \\
& = & {k \choose 2}{{n+2}\choose{k+2}}.
\end{eqnarray*}

\section{The Steiner $3$-hyper-Wiener index of modular graphs}
%%%%%%%%%%%%%%%%%%%%%%%%%%%%%%%%%%%%%%%%%%%%%%%%%%%%%%%%%%%%%%%%%%%%%
%%%%%%%%%%%%%%%%%%%%%%%%%%%%%%%%%%%%%%%%%%%%%%%%%%%%%%%%%%%%%%%%%%%%%

Finding a minimum Steiner tree for a given set of vertices $S$ is not easy and it is known to be NP-complete in general \cite{hwang}. Hence, in this section we prove a method for computing the Steiner $3$-hyper-Wiener index of modular graphs. Using this method, computing the Steiner $3$-hyper-Wiener index can be done in polynomial time.

For a connected graph $G$, we will denote by $\overline{WW}(G)$ the sum of squares of distances between all the pairs of vertices, i.e.
$$\overline{WW}(G) = \sum_{\lbrace u,v \rbrace \subseteq V(G)}d(u,v)^2.$$
\noindent Therefore, it is easy to observe that if $G$ is connected,
$$WW(G) = \frac{1}{2}W(G) + \frac{1}{2}\overline{WW}(G).$$

\noindent
 Furthermore, in the rest of the paper the following notations will be used for a graph $G$:
\begin{eqnarray*}
O_2(G) & = & \lbrace (u,v) \in V(G)^2 \, | \, u \neq v  \rbrace, \\
O_3(G) & = & \lbrace (u,v,w) \in V(G)^3 \, | \, u \neq v, u \neq w, v \neq w  \rbrace.
\end{eqnarray*}

%\noindent
%Moreover, for any $u \in V(G)$ we define
%$$W(u) = \sum_{v \in V(G)} d(u,v),$$
%which is the sum of distances from $u$ to any other vertex of $G$. Obviously, it holds
%$$W(G) = \frac{1}{2}\sum_{u \in V(G)}W(u).$$

In order to obtain the final result of this section, we need two lemmas.
\begin{lemma}
\label{vso_kva}
Let $G$ be a connected graph with at least three vertices. Then
$$\sum_{(u,v,w) \in O_3(G)}d(u,v)^2 = 2(|V(G)|-2)\overline{WW}(G).$$
\end{lemma}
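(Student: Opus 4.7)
The plan is to prove the identity by a straightforward double-counting/reduction argument: since the summand $d(u,v)^2$ does not depend on $w$, we can factor out the sum over $w$.

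First I would fix an ordered pair $(u,v)$ with $u \neq v$, and observe that the number of vertices $w \in V(G)$ such that $(u,v,w) \in O_3(G)$ is exactly $|V(G)|-2$ (every vertex except $u$ and $v$ is admissible). Hence
\begin{equation*}
\sum_{(u,v,w) \in O_3(G)} d(u,v)^2 \;=\; (|V(G)|-2) \sum_{(u,v) \in O_2(G)} d(u,v)^2.
\end{equation*}

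Next, I would relate the ordered-pair sum to $\overline{WW}(G)$. Since $d(u,v) = d(v,u)$, each unordered pair $\{u,v\} \subseteq V(G)$ contributes the same value $d(u,v)^2$ twice (once as $(u,v)$ and once as $(v,u)$) to the sum over $O_2(G)$, so
\begin{equation*}
\sum_{(u,v) \in O_2(G)} d(u,v)^2 \;=\; 2 \sum_{\{u,v\} \subseteq V(G)} d(u,v)^2 \;=\; 2\,\overline{WW}(G).
\end{equation*}
Combining the two displays yields the claimed equality $2(|V(G)|-2)\,\overline{WW}(G)$.

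There is essentially no obstacle here — the lemma is a bookkeeping identity, and the only thing worth checking carefully is the count $|V(G)|-2$ of admissible third vertices (which uses the hypothesis $|V(G)| \geq 3$ to guarantee the sum is non-vacuous) together with the factor $2$ arising from unordered-versus-ordered pairs. No properties of modular graphs, Steiner distances, or anything beyond the symmetry $d(u,v)=d(v,u)$ are needed.
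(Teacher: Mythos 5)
Your proposal is correct and follows essentially the same route as the paper: factor out the $(|V(G)|-2)$ admissible choices of $w$ for each ordered pair $(u,v)$, then convert the ordered-pair sum into $2\,\overline{WW}(G)$ via the symmetry $d(u,v)=d(v,u)$. No issues.
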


\begin{proof}
First we notice that
$$\sum_{(u,v,w) \in O_3(G)}d(u,v)^2 = \sum_{(u,v) \in O_2(G))} \sum_{\substack{w \in V(G) \\ w \neq u, w \neq v }} d(u,v)^2.$$
Therefore, we get

\begin{eqnarray*}
\sum_{(u,v,w) \in O_3(G)}d(u,v)^2 & = & \sum_{(u,v) \in O_2(G)} \left( d(u,v)^2 \sum_{\substack{w \in V(G) \\ w \neq u, w \neq v }} 1 \right) \\
& = &  \sum_{(u,v) \in O_2(G)} (|V(G)|-2)d(u,v)^2 \\
& = & (|V(G)|-2) \sum_{(u,v) \in O_2(G)} d(u,v)^2 \\
& = & 2(|V(G)|-2)\overline{WW}(G)
\end{eqnarray*}
and the proof is complete. \qed

\end{proof}

\begin{lemma} \label{mod_raz}
Let $G$ be a connected graph with at least three vertices. Three distinct vertices $u,v,w$ have at least one median if and only if for the set  $S= \lbrace u,v,w \rbrace$ it holds
$$2d(S)=d(u,v) + d(u,w) + d(v,w).$$
\end{lemma}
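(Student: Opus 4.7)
The plan is to sandwich the quantity $2d(S)$ between $d(u,v)+d(u,w)+d(v,w)$ as a universal lower bound and $2\bigl(d(u,z)+d(v,z)+d(w,z)\bigr)$ as an upper bound produced by any candidate median $z$, and then to observe that these two bounding quantities are equal to one another precisely when $z$ is a median.

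First I would prove the universal inequality $2d(S) \geq d(u,v)+d(u,w)+d(v,w)$, valid for any triple. Fix a minimum Steiner tree $T$ for $S$. Since every tree is a median graph, the triple $u,v,w$ admits a unique median $m$ inside $T$. Minimality forces $T$ to be precisely the union of the three $T$-paths joining $m$ to $u$, $v$, and $w$, because any leaf of $T$ outside $S$ could otherwise be pruned without destroying connectedness of the subgraph containing $S$. This yields both $|E(T)|=d_T(u,m)+d_T(v,m)+d_T(w,m)$ and $d_T(x,y)=d_T(x,m)+d_T(m,y)$ for each pair $\{x,y\}\subseteq S$. Combining these identities with $d(x,y)\leq d_T(x,y)$ and summing over the three pairs gives $d(u,v)+d(u,w)+d(v,w)\leq 2|E(T)|=2d(S)$.

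Next, the ``only if'' direction. If $z$ is a median of $u,v,w$ in $G$, the three defining identities give $d(u,v)+d(u,w)+d(v,w)=2\bigl(d(u,z)+d(v,z)+d(w,z)\bigr)$. The union of shortest $u$-$z$, $v$-$z$, and $w$-$z$ paths in $G$ is a connected subgraph containing $S$, so $d(S)\leq d(u,z)+d(v,z)+d(w,z)$; together with the universal lower bound, this forces $2d(S)=d(u,v)+d(u,w)+d(v,w)$.

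For the converse, assume $2d(S)=d(u,v)+d(u,w)+d(v,w)$. The chain of inequalities in the first paragraph must then collapse to equalities, so in particular $d_T(x,y)=d(x,y)$ for every pair $\{x,y\}\subseteq S$. Hence each $T$-path between two vertices of $S$ is a shortest path in $G$, and since the internal median $m$ of $T$ lies on all three of them, $m$ is a median of $u,v,w$ in $G$. The main technical point I foresee is justifying cleanly the ``Y-shape'' decomposition of a minimum Steiner tree on three vertices, i.e. the identity $|E(T)|=d_T(u,m)+d_T(v,m)+d_T(w,m)$, which is a consequence of iteratively pruning any leaf not in $S$; once this is in hand the rest is bookkeeping on the three pairwise distance identities in the tree.
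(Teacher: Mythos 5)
Your proof is correct and rests on the same key device as the paper's: the ``Y-shape'' decomposition of a minimum Steiner tree on three terminals at its branch vertex, combined with the triangle inequalities at that vertex and the union-of-geodesics upper bound $d(S)\leq d(u,z)+d(v,z)+d(w,z)$. The only difference is organizational --- you isolate the universal inequality $2d(S)\geq d(u,v)+d(u,w)+d(v,w)$ up front, which lets you skip the paper's explicit verification that the Y-tree built from a median is minimum, but the underlying ideas coincide.
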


\begin{proof}
Assume that $m$ is a median for three distinct vertices $u,v,w$ and let $S=\lbrace u,v,w \rbrace$. Denote $a=d(u,m)$, $b=d(v,m)$, and $c=d(w,m)$. Since $m$ is a median, it obviously holds $d(u,v) = a+b$, $d(u,w) = a+c$, and $d(v,w)=b+c$. Moreover, let $T$ be a connected subgraph of $G$ containing only a shortest path from $u$ to $m$, a shortest path from $v$ to $m$, and a shortest path from $w$ to $m$, see Figure \ref{Stree}.

\begin{figure}[h!] 
\begin{center}
\includegraphics[scale=0.75]{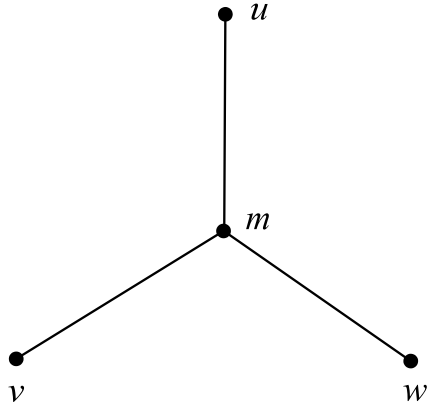}
\end{center}
\caption{\label{Stree} A connected subgraph $T$ which is a minimum Steiner tree.}
\end{figure}

We will first show that $T$ is a minimum Steiner tree for $S$. Let $T'$ be any minimum Steiner tree for $S$ and let $P$ be the path from $u$ to $v$ in $T'$. Also, let $m'$ be the last vertex on the path from $u$ to $w$ in $T'$ that is contained also in $P$. Since $T'$ is a minimum Steiner tree, $T'$ contains only $P$ and the path from $m'$ to $w$ in $T'$. Denote $x=d_{T'}(u,m')$, $y=d_{T'}(v,m')$, and $z=d_{T'}(w,m')$. Obviously, $x+ y \geq a+b$, $x+ z \geq a+c$, and $y+ z \geq b+c$. Therefore, we get $d(S)=|E(T')|= x+y+z \geq a+b+c \geq |E(T)|$ and thus $d(S)=|E(T)|$, which shows that $T$ is a minimum Steiner tree with $|E(T)|=a+b+c$. It follows $d(S) = \frac{1}{2}(d(u,v) + d(u,w) + d(v,w))$.

For the other direction, suppose that $2d(S)=d(u,v) + d(u,w) + d(v,w)$ and let $T'$ be a minimum Steiner tree, i.e.\, $|E(T')|=d(S)$. Let $m'$, $x$, $y$, and $z$ be defined the same as before. It follows $x+y \geq d(u,v)$, $x+z \geq d(u,w)$, and $y+z \geq d(v,w)$. Therefore, $2d(S) = 2(x+y+z) \geq d(u,v)+d(u,w)+d(v,w) = 2d(S)$ and we obtain $x+y = d(u,v)$, $x+z = d(u,w)$, and $y+z = d(v,w)$, which implies that $m'$ is a median for $u,v,w$. \qed
\end{proof}
\smallskip

%\begin{lemma}
%\label{vso_pro}
%Let $G$ be a connected graph. Then
%$$\sum_{\lbrace u,v,w \rbrace \subseteq V(G)}\left( d(u,v)d(u,w) + d(u,v)d(v,w)\right) = .$$
%\end{lemma}
%
%\begin{proof}
%First we notice that
%$$\sum_{\lbrace u,v,w \rbrace \subseteq V(G)} \left( d(u,v)d(u,w) + d(u,v)d(v,w)\right) = \frac{1}{3}\sum_{\lbrace u,v \rbrace \subseteq V(G)} \sum_{\substack{w \in V(G) \\ w \neq u, w \neq v }} \left( d(u,v)d(u,w) + d(u,v)d(v,w)\right),$$
%since any three vertices are counted three times in the sums on the right-hand side of the equation. Hence, we obtain
%
%\begin{eqnarray*}
%\sum_{\lbrace u,v,w \rbrace \subseteq V(G)}\left( d(u,v)d(u,w) + d(u,v)d(v,w)\right) & = & \frac{1}{3}\sum_{\lbrace u,v \rbrace \subseteq V(G)} \left( d(u,v) \sum_{\substack{w \in V(G) \\ w \neq u, w \neq v }} \left( d(u,w) + d(v,w) \right) \right) \\
%& = & \frac{1}{3} \sum_{\lbrace u,v \rbrace \subseteq V(G)} (n-2)d(u,v)^2 \\
%& = & \frac{n-2}{3} \sum_{\lbrace u,v \rbrace \subseteq V(G)} d(u,v)^2 \\
%& = & \frac{n-2}{3} \overline{WW}(G)
%\end{eqnarray*}
%
%\end{proof}

\noindent
In order to state the main result of the section, the following notation will be used for a connected graph $G$ with at least three vertices:
\begin{equation} \label{oznaka}
\widehat{WW}(G)= \sum_{ (u,v,w) \in O_3(G)} d(u,v)d(u,w).
\end{equation}

\noindent
Finally, we are able to prove how the Steiner 3-hyper-Wiener index of a modular graph $G$ can be obtained from $W(G)$, $\overline{WW}(G)$, and $\widehat{WW}(G)$.

\begin{theorem}
\label{hi-wie}
Let $G$ be a modular graph with at least three vertices. Then
$$SWW_3(G) =\frac{|V(G)|-2}{4}W(G) + \frac{|V(G)|-2}{8}\overline{WW}(G) + \frac{1}{8} \widehat{WW}(G).$$
\end{theorem}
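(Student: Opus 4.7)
The plan is to unpack $SWW_3(G)$ via its definition, reduce the linear part through the previously stated Theorem \ref{wie}, and handle the quadratic part by exploiting the modularity of $G$ through Lemma \ref{mod_raz}.

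By definition,
\[
SWW_3(G) \;=\; \tfrac{1}{2} SW_3(G) \;+\; \tfrac{1}{2}\sum_{\substack{S\subseteq V(G)\\|S|=3}} d(S)^2 .
\]
Theorem \ref{wie} immediately gives $\tfrac{1}{2} SW_3(G) = \tfrac{|V(G)|-2}{4} W(G)$, which matches the first term of the claimed identity. The task therefore reduces to showing
\[
\tfrac{1}{2}\sum_{\substack{S\subseteq V(G)\\|S|=3}} d(S)^2 \;=\; \tfrac{|V(G)|-2}{8}\overline{WW}(G) + \tfrac{1}{8}\widehat{WW}(G).
\]

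Since $G$ is modular, every triple $S=\{u,v,w\}$ admits a median, and Lemma \ref{mod_raz} yields $2d(S) = d(u,v)+d(u,w)+d(v,w)$. Squaring and expanding,
\[
4\,d(S)^2 \;=\; \bigl[d(u,v)^2+d(u,w)^2+d(v,w)^2\bigr] + 2\bigl[d(u,v)d(u,w)+d(u,v)d(v,w)+d(u,w)d(v,w)\bigr].
\]
I would then sum this identity over all unordered $3$-subsets of $V(G)$ and evaluate each block separately. For the squares, each unordered pair $\{a,b\}$ appears inside exactly $|V(G)|-2$ triples, so
\[
\sum_{\{u,v,w\}}\bigl[d(u,v)^2+d(u,w)^2+d(v,w)^2\bigr] \;=\; (|V(G)|-2)\,\overline{WW}(G),
\]
which is consistent with Lemma \ref{vso_kva} after passing between ordered and unordered sums.

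The remaining (cross-term) sum is where the only real bookkeeping occurs; this is the main obstacle. I would rewrite it in ordered form: for each unordered triple $\{a,b,c\}$, the products $d(a,b)d(a,c)$, $d(b,a)d(b,c)$, $d(c,a)d(c,b)$ each appear exactly twice among the ordered triples in $O_3(G)$ (once for each way of ordering the two non-central vertices). Hence
\[
\widehat{WW}(G) \;=\; \sum_{(u,v,w)\in O_3(G)} d(u,v)d(u,w) \;=\; 2\sum_{\{u,v,w\}}\bigl[d(u,v)d(u,w)+d(u,v)d(v,w)+d(u,w)d(v,w)\bigr].
\]
Combining the two displayed reductions in the expansion of $4\,d(S)^2$, one obtains
\[
4 \sum_{\{u,v,w\}} d(S)^2 \;=\; (|V(G)|-2)\,\overline{WW}(G) \;+\; \widehat{WW}(G),
\]
and dividing by $8$ produces exactly the last two terms of the target formula. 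Adding back $\tfrac{|V(G)|-2}{4}W(G)$ completes the proof.
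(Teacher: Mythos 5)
Your proposal is correct and follows essentially the same route as the paper: both reduce the linear part via Theorem \ref{wie}, apply Lemma \ref{mod_raz} to write $2d(S)$ as the sum of the three pairwise distances, square, and identify the square and cross terms with $\overline{WW}(G)$ and $\widehat{WW}(G)$. The only difference is cosmetic bookkeeping --- you count directly over unordered triples, while the paper passes to ordered triples (factor six) and invokes Lemma \ref{vso_kva}; both counts agree.
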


\begin{proof}
Since any three vertices in $G$ have a median, by using Lemma \ref{mod_raz} we calculate

\begin{eqnarray*}
\sum_{\substack{S \subseteq V(G) \\ |S|=3}} d(S)^2 & = & \sum_{\lbrace u,v,w \rbrace \subseteq V(G)} \frac{1}{4}(d(u,v) + d(u,w) + d(v,w))^2 \\
& = & \frac{1}{4} \sum_{\lbrace u,v,w \rbrace \subseteq V(G)} (d(u,v)^2 + d(u,w)^2 + d(v,w)^2) \\
& + & \frac{1}{2} \sum_{\lbrace u,v,w \rbrace \subseteq V(G)} (d(u,v)d(u,w) + d(u,v)d(v,w) + d(u,w)d(v,w)).
\end{eqnarray*}

\noindent
Therefore, since any unordered triple occurs six times as an ordered triple, it follows
\begin{eqnarray*}
\sum_{\substack{S \subseteq V(G) \\ |S|=3}} d(S)^2 &  = & \frac{1}{24} \sum_{(u,v,w) \in O_3(G)} (d(u,v)^2 + d(u,w)^2 + d(v,w)^2) \\
& + & \frac{1}{12} \sum_{(u,v,w) \in O_3(G)} (d(u,v)d(u,w) + d(u,v)d(v,w) + d(u,w)d(v,w)) \\
& = & \frac{1}{8} \sum_{(u,v,w) \in O_3(G)} d(u,v)^2 + \frac{1}{4} \sum_{(u,v,w) \in O_3(G)} d(u,v)d(u,w) \\
& = & \frac{|V(G)|-2}{4}\overline{WW}(G) + \frac{1}{4} \widehat{WW}(G),
\end{eqnarray*}
where the last equality follows by Lemma \ref{vso_kva} and Equation \eqref{oznaka}. Finally, using the definition of the Steiner 3-hyper-Wiener index and Theorem \ref{wie} the proof is complete. \qed
\end{proof}

\section{A cut method for partial cubes}

Theorem \ref{hi-wie} implies that the Steiner 3-hyper-Wiener index of a modular graph can be computed by using $W(G)$, $\overline{WW}(G)$, and $\widehat{WW}(G)$. However, the terms $\overline{WW}(G)$ and $\widehat{WW}(G)$ are not easy to calculate, especially if one is interested in finding closed formulas for a given family of graphs. Therefore, in this section we describe methods for computing $\overline{WW}(G)$ and $\widehat{WW}(G)$ of a partial cube $G$, which are much more efficient than the calculation by the definition.

To prove the results in this section, we need to introduce some additional notation. If $G$ is a partial cube with $\Theta$-classes $E_1, \ldots, E_d$, we denote by $U_i$ and $U_i'$ the connected components of the graph $G - E_i$, where $i \in \lbrace 1, \ldots, d \rbrace$. For any $i,j \in \lbrace 1, \ldots, d \rbrace$, $i \neq j$, set

\begin{eqnarray*}
N_{ij}^{00} & = & V(U_i) \cap V(U_j), \\
N_{ij}^{01} & = & V(U_i) \cap V(U_j'), \\
N_{ij}^{10} & = & V(U_i') \cap V(U_j), \\
N_{ij}^{11} & = & V(U_i') \cap V(U_j').
\end{eqnarray*}

\noindent
Then, for $i,j \in \lbrace 1, \ldots, d \rbrace$, $i \neq j$, and $k,l \in \lbrace 0, 1 \rbrace$ we define
$$n_{ij}^{kl} = |N_{ij}^{kl}|.$$

\noindent
Also, for $i \in \lbrace 1, \ldots, d \rbrace$, let
$$n^0_i = |V(U_i)|, \quad n^1_i = |V(U_i')|.$$

In \cite{sklavzar-2000} it was shown that if $G$ is a partial cube and $d$ is the number of its $\Theta$-classes, then
$$\sum_{u \in V(G)} \sum_{v \in V(G)}d(u,v)^2 = 2W(G) + 4 \sum_{i=1}^{d-1}\sum_{j=i+1}^d \left( n^{00}_{ij}n^{11}_{ij} +  n^{01}_{ij}n^{10}_{ij}\right).$$

\noindent
Moreover, it is known \cite{klavzar-2015} that if $G$ is a partial cube with $d$ $\Theta$-classes, then
\begin{equation} \label{cut_wie}
W(G) = \sum_{i=1}^d n^0_in^1_i.
\end{equation}

\noindent
Taking into account that $\sum_{u \in V(G)} \sum_{v \in V(G)}d(u,v)^2 = 2\sum_{\lbrace u,v \rbrace \subseteq V(G)} d(u,v)^2$ and combining the previous two results, we obtain the following result, which represents a method for computing $\overline{WW}(G)$ for a partial cube $G$.

\begin{proposition} \label{pom}
Let $G$ be a partial cube and let $d$ be the number of its $\Theta$-classes. Then
$$\overline{WW}(G) = \sum_{i=1}^d n^0_in^1_i + 2 \sum_{i=1}^{d-1}\sum_{j=i+1}^d \left( n^{00}_{ij}n^{11}_{ij} +  n^{01}_{ij}n^{10}_{ij}\right).$$
\end{proposition}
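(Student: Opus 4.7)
The plan is essentially to combine the two previously cited cut results in a single short algebraic manipulation, since the statement asserts nothing new beyond what one gets by rewriting them in the unordered-pair form that appears in the definition of $\overline{WW}(G)$.

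First I would start from the identity recalled from \cite{sklavzar-2000}, namely
$$\sum_{u \in V(G)} \sum_{v \in V(G)}d(u,v)^2 = 2W(G) + 4 \sum_{i=1}^{d-1}\sum_{j=i+1}^d \left( n^{00}_{ij}n^{11}_{ij} +  n^{01}_{ij}n^{10}_{ij}\right).$$
The key observation is that the double sum on the left ranges over ordered pairs and counts each unordered pair $\{u,v\}$ with $u \neq v$ exactly twice (the diagonal terms $u=v$ contribute $0$), so it equals $2\overline{WW}(G)$ by the definition of $\overline{WW}(G)$ given earlier in the section.

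Dividing both sides by $2$ therefore gives
$$\overline{WW}(G) = W(G) + 2\sum_{i=1}^{d-1}\sum_{j=i+1}^d \left( n^{00}_{ij}n^{11}_{ij} +  n^{01}_{ij}n^{10}_{ij}\right).$$
Substituting the cut formula \eqref{cut_wie} for the Wiener index, $W(G) = \sum_{i=1}^d n^0_i n^1_i$, yields exactly the claimed expression, completing the proof.

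There is no real obstacle here: once the ordered-to-unordered conversion factor of $2$ is handled, the statement is a direct consequence of the two cited identities. The only subtlety worth flagging is to be explicit about why the diagonal terms $u=v$ may be discarded (because $d(u,u)^2 = 0$), so that the equality between the ordered double sum and $2\overline{WW}(G)$ is justified cleanly.
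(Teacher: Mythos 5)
Your proposal is correct and follows exactly the same route as the paper, which also just observes that $\sum_{u}\sum_{v}d(u,v)^2 = 2\sum_{\{u,v\}}d(u,v)^2 = 2\overline{WW}(G)$ and then combines the cited identity from \cite{sklavzar-2000} with the cut formula \eqref{cut_wie} for $W(G)$. Your extra remark about the vanishing diagonal terms is a harmless bit of added explicitness.
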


On the other hand, a method for computing the term $\widehat{WW}(G)$ is not yet known and therefore, we develop it in the next theorem. If $G$ is a partial cube with $d$ $\Theta$-classes and $U_i,U_i'$, $i \in \lbrace 1, \ldots, d \rbrace$, are defined as before, then for two vertices $u,v \in V(G)$ we set

$$ \delta_i(u,v) = \left\{ \begin{matrix} 1; & u \in V(U_i) \, \& \, v \in V(U_i') \ \text{or} \ u \in V(U_i') \, \& \, v \in V(U_i),\\ 0;& \text{otherwise.}  \end{matrix} \right. $$

\noindent
It is not difficult to check that for any two vertices of $G$ the following equation holds (see, for example, \cite{sklavzar-2000}):
\begin{equation} \label{razd}
d(u,v) = \sum_{i=1}^d \delta_i(u,v).
\end{equation}

\begin{theorem}
\label{vso}
Let $G$ be a partial cube with at least three vertices and let $d$ be the number of its $\Theta$-classes. Then
\begin{eqnarray*}
\widehat{WW}(G)& =& \sum_{i=1}^d \left(n^{0}_{i}n^1_i(n^1_i-1) + n^{1}_{i}n^0_i(n^0_i-1)\right) \\
 & + &2 \sum_{i=1}^{d-1}\sum_{j=i+1}^d \Bigg( 3n^{00}_{ij}n^{01}_{ij}n^{10}_{ij} + 3n^{00}_{ij}n^{01}_{ij}n^{11}_{ij} + 3n^{00}_{ij}n^{10}_{ij}n^{11}_{ij} + 3n^{01}_{ij}n^{10}_{ij}n^{11}_{ij}  \\
 & + &  n^{00}_{ij}n^{11}_{ij}(n^{11}_{ij} - 1) + n^{01}_{ij}n^{10}_{ij}(n^{10}_{ij} - 1) + n^{10}_{ij}n^{01}_{ij}(n^{01}_{ij} - 1) + n^{11}_{ij}n^{00}_{ij}(n^{00}_{ij} - 1) \Bigg).
\end{eqnarray*}
\end{theorem}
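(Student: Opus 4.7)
The plan is to use the cut-expansion identity $d(u,v)=\sum_{i=1}^d \delta_i(u,v)$ from \eqref{razd} inside each of the two distance factors of $\widehat{WW}(G)$. Exchanging the order of summation, this gives
$$\widehat{WW}(G) = \sum_{i=1}^d \sum_{j=1}^d \sum_{(u,v,w) \in O_3(G)} \delta_i(u,v)\,\delta_j(u,w),$$
which I would split into the diagonal part $i=j$ (producing the first line of the claimed formula) and the off-diagonal part $i \neq j$ (producing the double sum).

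For the diagonal term with index $i$, the condition $\delta_i(u,v)\delta_i(u,w)=1$ forces both $v$ and $w$ to lie on the side of $E_i$ opposite $u$. Choosing $u \in V(U_i)$ yields $n^0_i\, n^1_i(n^1_i-1)$ admissible ordered triples (since $v,w$ must both lie in $V(U_i')$ and $v\neq w$), while the symmetric choice $u \in V(U_i')$ gives $n^1_i\, n^0_i(n^0_i-1)$; summing over $i$ produces the first line of the theorem exactly.

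For the off-diagonal part I would first use the bijection $(u,v,w)\mapsto(u,w,v)$ of $O_3(G)$ to note that the summand for an ordered pair $(i,j)$ equals the summand for $(j,i)$, so $\sum_{i\neq j}=2\sum_{i<j}$, which accounts for the prefactor $2$ in the statement. Fix a pair $i<j$. Every vertex of $G$ lies in exactly one cell $N_{ij}^{kl}$ according to its position relative to $E_i$ and $E_j$. Conditioning on the cell of $u$, the requirement $\delta_i(u,v)=1$ restricts $v$ to the union of the two cells whose first superscript is opposite to that of $u$, and $\delta_j(u,w)=1$ restricts $w$ to the union of the two cells whose second superscript is opposite to that of $u$. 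Cell disjointness already guarantees $u\neq v$ and $u\neq w$, so only $v\neq w$ has to be enforced; the two valid cell-unions meet in exactly one cell, whose cardinality must be subtracted from the product of their sizes. I would perform this enumeration for each of the four possible cells of $u$ and sum.

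The main obstacle is the bookkeeping at the end: one must check that every mixed monomial $n_{ij}^{k_1 l_1} n_{ij}^{k_2 l_2} n_{ij}^{k_3 l_3}$ built from three distinct cells appears with coefficient $3$ (it arises whenever $u$ sits in any of those three cells, i.e.\ three times out of the four subcases), while each ``repeated-cell'' contribution $n_{ij}^{kl}(n_{ij}^{kl}-1)$ pairs with the correct complementary factor (the cell diagonally opposite in the $\{0,1\}^2$ grid), which is precisely the subtraction coming from the $v\neq w$ correction. Once this verification is in place, multiplying the $i<j$ total by $2$ and adding the diagonal part gives the formula in the statement.
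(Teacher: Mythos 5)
Your proposal is correct and follows essentially the same route as the paper: expand both distance factors via the indicator identity $d(u,v)=\sum_i\delta_i(u,v)$, interchange summation, split into the diagonal part $i=j$ and the off-diagonal part $i\neq j$ (with the factor $2$ from the $(u,v,w)\mapsto(u,w,v)$ symmetry), and count admissible triples by conditioning on which cell $N^{kl}_{ij}$ contains $u$. Your inclusion--exclusion bookkeeping (each three-distinct-cell monomial arising three times, each repeated-cell term pairing $u$'s cell with its diagonally opposite cell) checks out and is in fact spelled out more explicitly than in the paper, which delegates the case analysis to figures.
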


\begin{proof}
By Equation \eqref{razd} we get
\begin{eqnarray*}
\sum_{ (u,v,w) \in O_3(G)}\left( d(u,v)d(u,w)\right)& =& \sum_{ (u,v,w) \in O_3(G)}\left[{ \Bigg( \sum_{i=1}^d \delta_i(u,v) \Bigg) \Bigg( \sum_{j=1}^d \delta_j(u,w) \Bigg)} \right] \\
& =& \sum_{ (u,v,w) \in O_3(G)}\left[\sum_{i=1}^d  \sum_{j=1}^d \left( \delta_i(u,v) \delta_j(u,w) \right) \right] \\
& =&\sum_{i=1}^d  \sum_{j=1}^d \left[ \sum_{ (u,v,w) \in O_3(G)}  \delta_i(u,v) \delta_j(u,w)  \right] \\
& =&\sum_{i=1}^d  \left[ \sum_{ (u,v,w) \in O_3(G)}  \delta_i(u,v) \delta_i(u,w)  \right] \\
& +&\sum_{i=1}^d  \sum_{j=1, j \neq i}^d \left[ \sum_{ (u,v,w) \in O_3(G)}  \delta_i(u,v) \delta_j(u,w)  \right].
\end{eqnarray*}

\noindent
Therefore, it follows

\begin{eqnarray*}
\widehat{WW}(G) &   =&\sum_{i=1}^d  \left[ \sum_{ (u,v,w) \in O_3(G)}  \delta_i(u,v) \delta_i(u,w)  \right] \\
& +&2\sum_{i=1}^{d-1}  \sum_{j=i+1}^{d} \left[ \sum_{ (u,v,w) \in O_3(G)}  \delta_i(u,v) \delta_j(u,w)  \right].
\end{eqnarray*}

\noindent
Since $\delta_i(u,v) \delta_i(u,w)=1$ if and only if $\delta_i(u,v)=1$ and $\delta_i(u,w)=1$ (see Figure \ref{moznosti1}), we obtain
$$\sum_{ (u,v,w) \in O_3(G)}  \delta_i(u,v) \delta_i(u,w) = n^{0}_{i}n^1_i(n^1_i-1) + n^{1}_{i}n^0_i(n^0_i-1).$$

\begin{figure}[h!] 
\begin{center}
\includegraphics[scale=0.65]{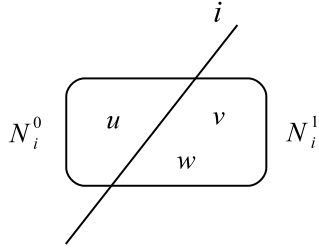}
\end{center}
\caption{\label{moznosti1} Possible position for $u$, $v$, and $w$.}
\end{figure}

Similarly, $\delta_i(u,v) \delta_j(u,w) = 1$ if and only if $\delta_i(u,v)=1$ and $\delta_j(u,w)=1$. First assume that $u,v,w$ belong to two of the sets $N^{00}_{ij},N^{01}_{ij},N^{10}_{ij},N^{11}_{ij}$. If, for example, $u \in N^{00}_{ij}$, the number of triples $(u,v,w)$ satisfying $\delta_i(u,v) \delta_j(u,w) = 1$ is exactly $n^{00}_{ij}n^{11}_{ij}(n^{11}_{ij} - 1)$.

Next, assume that $u,v,w$ belong to three of the sets $N^{00}_{ij},N^{01}_{ij},N^{10}_{ij},N^{11}_{ij}$. If $u \in N^{00}_{ij}$, we have three possibilities for $v$ and $w$, see Figure \ref{moznosti2}. 

\begin{figure}[h!] 
\begin{center}
\includegraphics[scale=0.65]{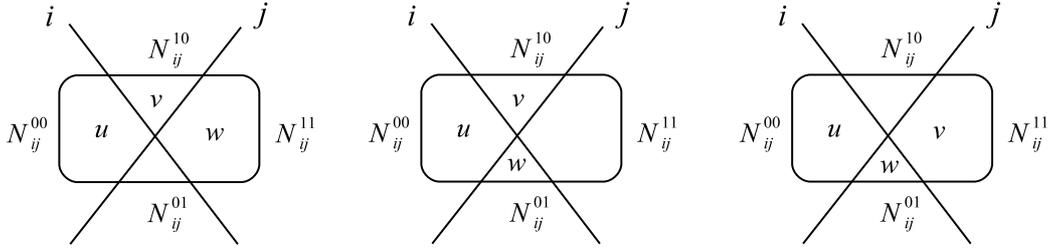}
\end{center}
\caption{\label{moznosti2} Three possibilities for $u$, $v$, and $w$.}
\end{figure}

Hence, the number of triples $(u,v,w)$ satisfying $\delta_i(u,v) \delta_j(u,w) = 1$ and $u \in N^{00}_{ij}$ is exactly $n^{00}_{ij}n^{01}_{ij}n^{10}_{ij} + n^{00}_{ij}n^{01}_{ij}n^{11}_{ij} + n^{00}_{ij}n^{10}_{ij}n^{11}_{ij}$. Similar formulas hold also when $u \in N^{01}_{ij}$, $u \in N^{10}_{ij}$, or $u \in N^{11}_{ij}$. Summing up all the contributions we obtain

\begin{eqnarray*}
 \sum_{ (u,v,w) \in O_3(G)}  \delta_i(u,v) \delta_j(u,w) & = & 3n^{00}_{ij}n^{01}_{ij}n^{10}_{ij} + 3n^{00}_{ij}n^{01}_{ij}n^{11}_{ij} + 3n^{00}_{ij}n^{10}_{ij}n^{11}_{ij} + 3n^{01}_{ij}n^{10}_{ij}n^{11}_{ij}  \\
 & + &  n^{00}_{ij}n^{11}_{ij}(n^{11}_{ij} - 1) + n^{01}_{ij}n^{10}_{ij}(n^{10}_{ij} - 1) + n^{10}_{ij}n^{01}_{ij}(n^{01}_{ij} - 1) + n^{11}_{ij}n^{00}_{ij}(n^{00}_{ij} - 1) 
 \end{eqnarray*}
and the proof is complete. \qed
\end{proof}
\smallskip

\noindent
Finally, we obtain the main result of this paper.

\begin{theorem} \label{glavni}
Let $G$ be a modular graph and a partial cube with at least three vertices and let $d$ be the number of $\Theta$-classes of $G$. Then

\begin{eqnarray*}
SWW_3(G)& =& \frac{3|V(G)|-6}{8}\sum_{i=1}^d n^0_in^1_i + \frac{|V(G)|-2}{4} \sum_{i=1}^{d-1}\sum_{j=i+1}^d \left( n^{00}_{ij}n^{11}_{ij} +  n^{01}_{ij}n^{10}_{ij}\right) \\
& + & \frac{1}{8}\sum_{i=1}^d \left(n^{0}_{i}n^1_i(n^1_i-1) + n^{1}_{i}n^0_i(n^0_i-1)\right) \\
 & + & \frac{1}{4}\sum_{i=i}^{d-1}\sum_{j=i+1}^d \Bigg( 3n^{00}_{ij}n^{01}_{ij}n^{10}_{ij} + 3n^{00}_{ij}n^{01}_{ij}n^{11}_{ij} + 3n^{00}_{ij}n^{10}_{ij}n^{11}_{ij} + 3n^{01}_{ij}n^{10}_{ij}n^{11}_{ij}  \\
 & + &  n^{00}_{ij}n^{11}_{ij}(n^{11}_{ij} - 1) + n^{01}_{ij}n^{10}_{ij}(n^{10}_{ij} - 1) + n^{10}_{ij}n^{01}_{ij}(n^{01}_{ij} - 1) + n^{11}_{ij}n^{00}_{ij}(n^{00}_{ij} - 1) \Bigg).
\end{eqnarray*}

\end{theorem}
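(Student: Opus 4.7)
The plan is to observe that Theorem \ref{glavni} is essentially a compilation of the previous results, assembled by straightforward substitution. Since $G$ is assumed to be both modular and a partial cube, all three of Theorem \ref{hi-wie}, Proposition \ref{pom}, and Theorem \ref{vso} apply to $G$, and Equation \eqref{cut_wie} gives a cut-method expression for $W(G)$ as well. So the strategy is to start from
$$SWW_3(G) = \frac{|V(G)|-2}{4}W(G) + \frac{|V(G)|-2}{8}\overline{WW}(G) + \frac{1}{8}\widehat{WW}(G),$$
replace each of $W(G)$, $\overline{WW}(G)$, and $\widehat{WW}(G)$ by the corresponding cut-method expression, and collect terms.

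First I would substitute $W(G)=\sum_{i=1}^d n_i^0 n_i^1$ from Equation \eqref{cut_wie} into the first summand, which contributes $\frac{|V(G)|-2}{4}\sum_{i=1}^d n_i^0 n_i^1$. Next, applying Proposition \ref{pom} to the middle summand gives
$$\frac{|V(G)|-2}{8}\sum_{i=1}^d n_i^0 n_i^1 + \frac{|V(G)|-2}{4}\sum_{i=1}^{d-1}\sum_{j=i+1}^d \left(n_{ij}^{00}n_{ij}^{11} + n_{ij}^{01}n_{ij}^{10}\right).$$
The two contributions proportional to $\sum_{i=1}^d n_i^0 n_i^1$ then combine to yield the coefficient $\frac{|V(G)|-2}{4} + \frac{|V(G)|-2}{8} = \frac{3|V(G)|-6}{8}$, which is exactly what appears in the statement.

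Finally, I would invoke Theorem \ref{vso} for $\widehat{WW}(G)$, multiplied by $\frac{1}{8}$. Since the right-hand side of Theorem \ref{vso} already contains a factor of $2$ before the double sum over $i<j$, the $\frac{1}{8}$ turns this into the coefficient $\frac{1}{4}$ displayed in Theorem \ref{glavni}, while the single sum $\sum_{i=1}^d (n_i^0 n_i^1(n_i^1-1) + n_i^1 n_i^0(n_i^0-1))$ carries the coefficient $\frac{1}{8}$ as stated.

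There is no real obstacle; the proof is pure bookkeeping. The only point requiring minor care is matching the prefactors so that the combined coefficient of $\sum_{i=1}^d n_i^0 n_i^1$ comes out to $\frac{3|V(G)|-6}{8}$ rather than any of the intermediate fractions, and recognizing that the factor of $2$ already present in the cut-method expressions of Proposition \ref{pom} and Theorem \ref{vso} is what produces the $\frac{|V(G)|-2}{4}$ and $\frac{1}{4}$ prefactors of the respective double sums in Theorem \ref{glavni}.
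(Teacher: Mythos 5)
Your proposal is correct and matches the paper's proof, which simply states that the result follows by combining Theorem \ref{hi-wie}, Equation \eqref{cut_wie}, Proposition \ref{pom}, and Theorem \ref{vso}; your coefficient bookkeeping (in particular $\frac{|V(G)|-2}{4}+\frac{|V(G)|-2}{8}=\frac{3|V(G)|-6}{8}$ and the factors of $2$ turning $\frac{1}{8}$ into $\frac{1}{4}$) is exactly the substitution the paper leaves implicit.
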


\begin{proof}
The result follows by Theorem \ref{hi-wie}, Equation \eqref{cut_wie}, Proposition \ref{pom}, and Theorem \ref{vso}. \qed
\end{proof}
\smallskip

\noindent
Note that the number of $\Theta$-classes of $G$ is much smaller than the number of its edges, therefore, this result enables us to compute the Steiner 3-hyper-Wiener index much more efficiently. In particular, since any median graph is a partial cube and a modular graph, Theorem \ref{glavni} holds for all median graphs. An example showing how this theorem can be used is presented in the next section.

\section{The Steiner 3-hyper-Wiener index of grid graphs}

Let $m$ and $n$ be positive integers. In this paper, the \textit{grid graph} $G_{m,n}$ is defined as the Cartesian product of two paths $P_m$ and $P_n$, i.e.\,$G_{m,n}=P_m \Box P_n$, see Figure \ref{cuts}. Grid graphs are a subfamily of polyomino graphs (a \textit{polyomino graph} consists of a cycle $C$ in the infinite square lattice together with all squares inside $C$), which are known chemical graphs and often arise also in other real-word problems, for an example see \cite{zhou}.

\noindent
Let $G$ be a grid graph (or a polyomino graph) drawn in the plane such that every edge is a line segment of length $1$. An \textit{elementary cut} of $G$ is a line segment that starts at
the center of a peripheral edge of $G$,
goes orthogonal to it and ends at the first next peripheral
edge of $G$. Note that elementary cuts can be defined also for benzenoid systems, where they have been
described and illustrated by numerous examples in several
earlier articles. The elementary cuts of a grid graph $G_{m,n}$ will be denoted by $C_1, \ldots, C_{n-1}$ and $D_1, \ldots,D_{m-1}$, as shown in Figure \ref{cuts}.

\begin{figure}[h!] 
\begin{center}
\includegraphics[scale=0.65]{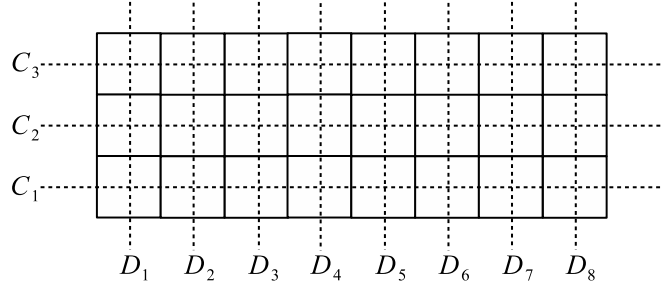}
\end{center}
\caption{\label{cuts} Grid graph $G_{9,4}$ with all the elementary cuts.}
\end{figure}

The main insight for our consideration
is that every $\Theta$-class of a grid graph (or a polyomino graph)
coincides with exactly one of its elementary cuts. Therefore, it is not difficult to check that all grid graphs (or polyomino graphs) are partial cubes.

Moreover, in \cite{kl-sk} it was studied which subgraphs of grid graphs are median graphs. In particular, every grid graph is known to be a median graph. Hence, in this section we find a closed formula for the Steiner $3$-hyper-Wiener index of $G_{m,n}$ by using Theorem \ref{glavni}. 

In order to simplify the computation, we introduce the following notation for a partial cube $G$. For any $\Theta$-class $E_i$ of $G$, let 
\begin{eqnarray*}
f_1(E_i) & = & n^0_in^1_i, \\
f_2(E_i) & = & n^{0}_{i}n^1_i(n^1_i-1) + n^{1}_{i}n^0_i(n^0_i-1)
\end{eqnarray*}
and for any two distinct $\Theta$-classes $E_i$,$E_j$ let
\begin{eqnarray*}
g_1(E_i,E_j) & = & n^{00}_{ij}n^{11}_{ij} +  n^{01}_{ij}n^{10}_{ij}, \\
g_2(E_i,E_j) & = & 3n^{00}_{ij}n^{01}_{ij}n^{10}_{ij} + 3n^{00}_{ij}n^{01}_{ij}n^{11}_{ij} + 3n^{00}_{ij}n^{10}_{ij}n^{11}_{ij} + 3n^{01}_{ij}n^{10}_{ij}n^{11}_{ij}  \\
 & + &  n^{00}_{ij}n^{11}_{ij}(n^{11}_{ij} - 1) + n^{01}_{ij}n^{10}_{ij}(n^{10}_{ij} - 1) + n^{10}_{ij}n^{01}_{ij}(n^{01}_{ij} - 1) + n^{11}_{ij}n^{00}_{ij}(n^{00}_{ij} - 1).
\end{eqnarray*}

\noindent
Moreover, if $E_1, \ldots, E_d$ are all the $\Theta$-classes of $G$, we define

\begin{eqnarray*}
S_1(G) & = & \sum_{i=1}^d f_1(E_i), \\
S_2(G) & = & \sum_{i=1}^d f_2(E_i), \\
S_3(G) & = & \sum_{i=1}^{d-1} \sum_{j=i+1}^d g_1(E_i,E_j), \\
S_4(G) & = & \sum_{i=1}^{d-1} \sum_{j=i+1}^d g_2(E_i,E_j).
\end{eqnarray*}

First, we compute numbers $n^{0}_i$ and $n^{1}_i$ for any elementary cut. The results are presented in Table \ref{tabela1}.

\begin{table}[h!]
\centering

\begin{tabular}{|c||c|c|} 
\hline
Elementary cut & $n^{0}_{i}$ & $n^{1}_{i}$   \\ \hline \hline
$C_i$ ($i=1,\ldots,n-1$) & $im$ & $(n-i)m$   \\ \hline
$D_i$ ($i=1,\ldots,m-1$) & $ni$ & $n(m-i)$   \\ \hline

\end{tabular}
\caption{ \label{tabela1} Number of vertices in the connected components with respect to an elementary cut.}
\end{table}

\noindent
Therefore, we can compute the following contributions from Table \ref{tabela3}.

\begin{table}[h!]
\centering

\begin{tabular}{|c||c|} 
\hline
 Sum & Result     \\ \hline \hline
$\sum_{i=1}^{n-1} f_1(C_i)$, $n \geq 2$ & $\frac{1}{6}(m^2n^3 - m^2n)$    \\ \hline
$\sum_{i=1}^{m-1} f_1(D_i)$, $m \geq 2$ & $\frac{1}{6}(n^2m^3 - n^2m)$       \\ \hline \hline
$\sum_{i=1}^{n-1} f_2(C_i)$, $n \geq 2$ & $\frac{1}{6}(m^3n^4 - m^3n^2 - 2m^2n^3 + 2m^2n)$     \\ \hline
$\sum_{i=1}^{m-1} f_2(D_i)$, $m \geq 2$ & $\frac{1}{6}(n^3m^4 - n^3m^2 - 2n^2m^3 + 2n^2m)$ \\ \hline

\end{tabular}
\caption{ \label{tabela3} Partial results for computing $S_1(G_{m,n})$ and $S_2(G_{m,n})$.}
\end{table}

\noindent
Finally, for $m,n \geq 2$ one can obtain
\begin{eqnarray*}
S_1(G_{m,n}) & = & \sum_{i=1}^{n-1} f_1(C_i) + \sum_{i=1}^{m-1}f_1(D_i) \\ 
& = & \frac{1}{6} \big(m^3n^2 + m^2n^3 -m^2n - mn^2 \big),
\end{eqnarray*}

\begin{eqnarray*}
S_2(G_{m,n}) & = & \sum_{i=1}^{n-1} f_2(C_i) + \sum_{i=1}^{m-1}f_2(D_i) \\ 
& = & \frac{1}{6}\big(m^4n^3 + m^3n^4 - 3m^3n^2 - 3m^2n^3 + 2m^2n + 2mn^2 \big).
\end{eqnarray*}

Using $S_1(G_{m,n})$, Theorem \ref{wie}, and Equation \eqref{cut_wie}, we can compute the Steiner 3-Wiener index of $G_{m,n}$.

\begin{proposition}
Let $G_{m,n}$ be a grid graph such that $m,n \geq 2$. Then
$$ SW_3(G_{m,n}) = \frac{1}{12} \big(m^4n^3 + m^3n^4 - 3m^3n^2 - 3m^2n^3 + 2m^2n + 2mn^2 \big).$$
\end{proposition}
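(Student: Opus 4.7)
The plan is to chain together the tools that have already been set up. First, since $G_{m,n} = P_m \Box P_n$ is known to be a median graph, it is in particular a modular graph and also a partial cube, so both Theorem \ref{wie} and Equation \eqref{cut_wie} apply to it. The identity from Theorem \ref{wie} gives
$$SW_3(G_{m,n}) = \frac{|V(G_{m,n})|-2}{2}\,W(G_{m,n}),$$
and since $|V(G_{m,n})| = mn$, this reduces the task to computing $W(G_{m,n})$.

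Next, I would invoke the fact, recorded just above Table \ref{tabela1}, that every $\Theta$-class of $G_{m,n}$ coincides with exactly one of its elementary cuts $C_1,\dots,C_{n-1},D_1,\dots,D_{m-1}$. By Equation \eqref{cut_wie}, the Wiener index is then
$$W(G_{m,n}) = \sum_{i=1}^{d} n^0_i n^1_i = \sum_{i=1}^{d} f_1(E_i) = S_1(G_{m,n}),$$
so one can simply quote the already-computed value $S_1(G_{m,n}) = \frac{1}{6}(m^3n^2 + m^2n^3 - m^2n - mn^2)$.

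Finally, substituting these two ingredients yields
$$SW_3(G_{m,n}) = \frac{mn-2}{12}\bigl(m^3n^2 + m^2n^3 - m^2n - mn^2\bigr),$$
and expanding and collecting monomials gives the claimed closed form. The only step requiring any care is this algebraic expansion: multiplying $(mn-2)$ through produces the six terms $m^4n^3$, $m^3n^4$, $-m^3n^2$, $-m^2n^3$, $-2m^3n^2 - 2m^2n^3$, and $+2m^2n + 2mn^2$, which combine to $m^4n^3 + m^3n^4 - 3m^3n^2 - 3m^2n^3 + 2m^2n + 2mn^2$, matching the statement. There is no real obstacle here; the entire work was done when $S_1(G_{m,n})$ was evaluated, and the proposition is essentially a one-line corollary of Theorem \ref{wie} once the Wiener index of the grid is in hand via the cut method.
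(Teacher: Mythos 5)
Your proposal is correct and follows exactly the route the paper itself takes: Theorem \ref{wie} reduces $SW_3(G_{m,n})$ to $\frac{mn-2}{2}W(G_{m,n})$, the Wiener index is obtained from Equation \eqref{cut_wie} as $S_1(G_{m,n})$, and the algebraic expansion you carry out is accurate. Nothing is missing.
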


Next, we compute numbers $n^{00}_{ij}$, $n^{01}_{ij}$, $n^{10}_{ij}$, and $n^{11}_{ij}$ for any pair of elementary cuts. The results are gathered in Table \ref{tabela2}.

\begin{table}[h!]
\centering
\begin{tabular}{|c||c|c|c|c|} 
\hline
Pair of elementary cuts & $n^{00}_{ij}$ & $n^{01}_{ij}$ & $n^{10}_{ij}$ & $n^{11}_{ij}$  \\ \hline \hline
$C_i, C_j$ ($i<j$) & $im$ & $0$ & $(j-i)m$ & $(n-j)m$  \\ \hline
$D_i, D_j$ ($i<j$) & $ni$ & $0$ & $n(j-i)$ & $n(m-j)$  \\ \hline
$C_i,D_j$  & $ij$ & $i(m-j)$ & $(n-i)j$& $(n-i)(m-j)$  \\ \hline

\end{tabular}
\caption{ \label{tabela2} Number of vertices in the connected components with respect to all the pairs of elementary cuts.}
\end{table}

Hence, we can calculate the following contributions from Table \ref{tabela4}.

\begin{table}[h!]
\centering

\begin{tabular}{|c||c|} 
\hline
 Sum & Result     \\ \hline \hline
$\sum_{i=1}^{n-2}\sum_{j=i+1}^{n-1} g_1(C_i,C_j)$, $n \geq 3$ & $\frac{1}{24}(m^2n^4 - 2m^2n^3 - m^2n^2 + 2m^2n)$    \\ \hline
$\sum_{i=1}^{m-2}\sum_{j=i+1}^{m-1} g_1(D_i,D_j)$, $m \geq 3$ & $\frac{1}{24}(m^4n^2 - 2m^3n^2 - m^2n^2 + 2mn^2)$ \\ \hline
$\sum_{i=1}^{n-1}\sum_{j=1}^{m-1} g_1(C_i,D_j)$, $m,n \geq 2$ & $\frac{1}{18}(m^3n^3 - m^3n - mn^3 + mn)$ \\ \hline \hline
$\sum_{i=1}^{n-2}\sum_{j=i+1}^{n-1} g_2(C_i,C_j)$, $n \geq 3$ & $\frac{1}{120}(7m^3n^5 - 10m^3n^4 - 15m^3n^3 - 10m^2n^4$ \\ 
& $+ 10m^3n^2 + 20m^2n^3 + 8m^3n + 10m^2n^2 - 20m^2n)$     \\ \hline
$\sum_{i=1}^{m-2}\sum_{j=i+1}^{m-1} g_2(D_i,D_j)$, $m \geq 3$ & $\frac{1}{120}(7m^5n^3 - 10m^4n^3 - 15m^3n^3 - 10m^4n^2$ \\ 
&$+ 10m^2n^3 + 20m^3n^2 + 8mn^3 + 10m^2n^2 - 20mn^2)$ \\ \hline
$\sum_{i=1}^{n-1}\sum_{j=1}^{m-1} g_2(C_i,D_j)$, $m,n \geq 2$ & $\frac{1}{9}(m^4n^4 - m^4n^2 - m^3n^3 - m^2n^4 + m^3n + m^2n^2 + mn^3 - mn)$\\ \hline

\end{tabular}
\caption{ \label{tabela4} Partial results for computing $S_3(G_{m,n})$ and $S_4(G_{m,n})$.}
\end{table}

\noindent
Finally, for $m,n \geq 3$ one can compute

\begin{eqnarray*}
S_3(G_{m,n}) & = & \sum_{i=1}^{n-2}\sum_{j=i+1}^{n-1} g_1(C_i,C_j) + \sum_{i=1}^{m-2}\sum_{j=i+1}^{m-1} g_1(D_i,D_j) + \sum_{i=1}^{n-1}\sum_{j=1}^{m-1} g_1(C_i,D_j) \\
& = & \frac{1}{24} \big(m^4n^2 + m^2n^4 - 2m^3n^2 - 2m^2n^3 - 2m^2n^2  + 2m^2n + 2mn^2 \big) \\
& + & \frac{1}{18} \big(m^3n^3 - m^3n - mn^3 + mn \big),
\end{eqnarray*}

\begin{eqnarray*}
S_4(G_{m,n}) & = & \sum_{i=1}^{n-2}\sum_{j=i+1}^{n-1} g_2(C_i,C_j) + \sum_{i=1}^{m-2}\sum_{j=i+1}^{m-1} g_2(D_i,D_j) + \sum_{i=1}^{n-1}\sum_{j=1}^{m-1} g_2(C_i,D_j) \\
& = & \frac{1}{120} \big(7m^5n^3 + 7m^3n^5 - 10m^4n^3 - 10m^3n^4 - 10m^4n^2 - 30m^3n^3 - 10m^2n^4 \\
&  + & 30m^3n^2 + 30m^2n^3 + 8m^3n + 20m^2n^2 + 8mn^3 - 20m^2n - 20mn^2 \big) \\
& + & \frac{1}{9} \big(m^4n^4 - m^4n^2 - m^3n^3 - m^2n^4 + m^3n + m^2n^2 + mn^3 - mn \big).
\end{eqnarray*}

Combining all the obtained results and Theorem \ref{glavni}, we obtain the main result of this section.

\begin{theorem}
Let $G_{m,n}$ be a grid graph such that $m,n \geq 3$. Then
\begin{eqnarray*}
SWW_3(G_{m,n})&=& \frac{1}{360} \big( 9m^5n^3 + 15m^4n^4 + 9m^3n^5 + 15m^4n^3 + 15m^3n^4 - 30m^4n^2 - 50m^3n^3 - 30m^2n^4 \\
&+ & 26m^3n - 45m^3n^2 - 45m^2n^3  + 45m^2n^2 + 26mn^3 + 30m^2n + 30mn^2 - 20mn \big).
\end{eqnarray*}
\noindent
Moreover, for any $n\geq 3$ it holds
\begin{eqnarray*}
SWW_3(G_{2,n})&=&\frac{1}{15} \big(3n^5 + 10n^4 - 25n^2 + 12n \big).
\end{eqnarray*}

​
\end{theorem}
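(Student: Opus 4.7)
The plan is to feed the closed-form expressions for $S_1(G_{m,n})$, $S_2(G_{m,n})$, $S_3(G_{m,n})$, $S_4(G_{m,n})$ into Theorem \ref{glavni}. Since $G_{m,n}$ is a median graph (hence both modular and a partial cube), $|V(G_{m,n})| = mn$, and its $\Theta$-classes are precisely the $m+n-2$ elementary cuts $C_1,\ldots,C_{n-1},D_1,\ldots,D_{m-1}$, the statement of Theorem \ref{glavni} specialises to
$$SWW_3(G_{m,n}) \;=\; \frac{3mn-6}{8}\, S_1(G_{m,n}) \,+\, \frac{1}{8}\, S_2(G_{m,n}) \,+\, \frac{mn-2}{4}\, S_3(G_{m,n}) \,+\, \frac{1}{4}\, S_4(G_{m,n}).$$
This single identity will drive both parts of the theorem.

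For the generic case $m,n\geq 3$, I would substitute the four displayed polynomials in $m,n$ obtained just before the statement (the closed forms for $S_1,S_2$ valid for $m,n\geq 2$, and those for $S_3,S_4$ valid for $m,n\geq 3$) into the boxed identity, then expand products such as $(mn-2)\,S_3(G_{m,n})$ and $(3mn-6)\,S_1(G_{m,n})$, and finally collect the result by monomials $m^a n^b$. The outcome is a polynomial of total degree $8$ that, after routine simplification, matches the claimed expression coefficient by coefficient.

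For the exceptional case $m=2$, $n\geq 3$, I would first note that the only horizontal elementary cut is $D_1$, so the double sums $\sum_{i=1}^{m-2}\sum_{j=i+1}^{m-1}g_k(D_i,D_j)$ contributing to $S_3$ and $S_4$ are vacuous and must be dropped from the general closed formulas. The remaining partial contributions in Tables~\ref{tabela3} and~\ref{tabela4} (the $C$--$C$ sum and the mixed $C$--$D$ sum) remain valid at $m=2$, and $S_1(G_{2,n})$, $S_2(G_{2,n})$ are still read off from the displayed closed forms at $m=2$. Substituting these modified values into the boxed identity and simplifying then gives the stated formula for $SWW_3(G_{2,n})$.

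The main obstacle is purely bookkeeping rather than conceptual: one is combining several polynomials of total degree up to $8$ with prefactors involving $mn-2$, so a slip of a coefficient is easy. I would either delegate the expansion to a computer algebra system or organise the computation by grouping contributions according to a fixed monomial $m^a n^b$ before summing, which minimises the risk of arithmetic error.
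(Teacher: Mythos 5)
Your proposal is correct and follows exactly the route the paper takes: the paper's proof is precisely the substitution of $S_1(G_{m,n})$, $S_2(G_{m,n})$, $S_3(G_{m,n})$, $S_4(G_{m,n})$ into Theorem \ref{glavni} with $|V(G_{m,n})|=mn$, yielding the identity $SWW_3(G_{m,n})=\frac{3mn-6}{8}S_1+\frac{1}{8}S_2+\frac{mn-2}{4}S_3+\frac{1}{4}S_4$ and then a polynomial expansion. Your additional care in handling the $m=2$ case (dropping the vacuous $D$--$D$ double sums while keeping the $C$--$C$ and mixed contributions) is a sensible and correct elaboration of a step the paper leaves implicit.
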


\section*{Acknowledgement} 
\noindent
The author was financially supported from the Slovenian Research Agency (research core funding No. P1-0297).

\baselineskip=16pt

\end{document}